\newtheorem{theorem}{Theorem}
\theoremstyle{definition}
\newtheorem{definition}{Definition}
\newtheorem{example}{Example}
\newtheorem*{fubini}{Theorem (Fubini-Tonelli)}
\DeclareMathOperator*{\I}{I}
\author{Janne V. Kujala\thanks{Address: Department of Mathematical
    Information Technology, University of Jyv\"askyl\"a, P.O.Box 35,
    FI-40014 Jyv\"askyl\"a, Finland.  Email address:
    \texttt{jvk@iki.fi}. }}
\title{A Remark on the Assumptions of Bayes' Theorem\thanks{This
    research was supported by the Academy of Finland (grant number
    121855). The author is grateful to Matti Vihola for comments.}}
\begin{document}

\maketitle

\begin{abstract}
  We formulate simple equivalent conditions for the validity
  of Bayes' formula for conditional densities.  We show that for any
  random variables $X$ and $Y$ (with values in arbitrary measurable
  spaces), the following are equivalent:
  \begin{enumerate}
    \item $X$ and $Y$ have a joint density w.r.t.\ a product measure
      $\mu\times\nu$,
    \item $P_{X,Y}\ll P_X\times P_Y$, (here $P_{\bullet}$ denotes the
    distribution of $\bullet$)
    \item $X$ has a conditional density $p(x\mid y)$ w.r.t.\ a
      $\sigma$-finite measure $\mu$,
    \item $X$ has a conditional distribution $P_{X\mid Y}$ such that
      $P_{X\mid y} \ll P_X$ for all $y$,
    \item $X$ has a conditional distribution $P_{X\mid Y}$ and a
      marginal density $p(x)$ w.r.t.\ a measure $\mu$ such that
      $P_{X\mid y}\ll\mu$ for all $y$.
  \end{enumerate}
  Furthermore, given random variables $X$ and $Y$ with a conditional
  density $p(y\mid x)$ w.r.t.\ $\nu$ and a marginal density $p(x)$
  w.r.t. $\mu$, we show that Bayes' formula
  \[
  p(x\mid y) = \frac{p(y\mid x)p(x)}{\int p(y\mid x)p(x)d\mu(x)}
  \]
  yields a conditional density $p(x\mid y)$ w.r.t.\ $\mu$ if and only
  if $X$ and $Y$ satisfy the above conditions.  Counterexamples
  illustrating the nontriviality of the results are given, and
  implications for sequential adaptive estimation are considered.  
  \bigskip
  
  \noindent
  \textbf{AMS2000 subject classifications:} 60A05; 60A10.
\end{abstract}

\section{Preliminaries}

Let $(\Omega,\mathcal{F},\Pr)$ be a probability space.  A \emph{random
variable} is a measurable mapping $X: \Omega\to \mathsf{X}$ to some
measurable space $(\mathsf{X},\mathcal X)$ (usually the real line
$\mathbb{R}$ equipped with the Borel $\sigma$-algebra
$\mathcal{B}({\mathbb{R}})$).  The \emph{distribution} of the random
variable is the measure $P_X: S\mapsto \Pr(X^{-1}(S))$ induced on
$\mathcal{X}$.  If $P_X(S) = \int_S p_X(x)d\mu(x)$ for all
$S\in\mathcal{X}$ for some measurable function
$p_X:\mathsf{X}\to[0,\infty]$ and some measure
$\mu:\mathcal{X}\to[0,\infty]$, then $p_X$ is called a \emph{density}
of $X$ w.r.t.\ $\mu$.  For brevity, we leave out the subscript of the
density when it matches the arguments, i.e, instead of $p_X(x)$, we
write simply $p(x)$.

We define the product
$\mu\times\nu:\mathcal{X}\otimes\mathcal{Y}\to[0,\infty]$ of arbitrary
measures $\mu:\mathcal{X}\to[0,\infty]$ and
$\nu:\mathcal{Y}\to[0,\infty]$ by
\[
S \mapsto \inf\left\{\,\sum_{k=1}^\infty \mu(A_k)\nu(B_k):
\{A_k\}_{k=1}^\infty\subset\mathcal{X},\
\{B_k\}_{k=1}^\infty\subset\mathcal{Y},\ S\subset \bigcup_{k=1}^\infty
A_k\times B_k\,\right\},
\]
where $\mathcal{X}\otimes\mathcal{Y}$ denotes the $\sigma$-algebra
generated by all measurable rectangles.

\begin{fubini}
  Suppose $(\mathsf{X},\mathcal{X},\mu)$ and
  $(\mathsf{Y},\mathcal{Y},\nu)$ are measure spaces and
  $f:\mathsf{X}\times\mathsf{Y}\to[-\infty,\infty]$ is a measurable
  function.  If either $f$ is integrable or $f$ is nonnegative with
  $\sigma$-finite support $\{\,(x,y):f(x,y)\ne0\,\}$, then
  \[
  \int f(x,y)d(\mu\times\nu)(x,y) =
  \int\left[\int f(x,y)d\mu(x)\right]d\mu(y) = 
  \int\left[\int f(x,y)d\nu(y)\right]d\mu(x).
  \]
\end{fubini}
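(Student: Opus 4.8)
The plan is to deduce the stated identity from the classical Fubini--Tonelli theorem for $\sigma$-finite measure spaces, by splitting the relevant set into a $\sigma$-finite ``core'' on which the classical theorem applies and a $\mu\times\nu$-null ``remainder'' that one checks contributes nothing to any of the three integrals. I would first record the rectangle identity $(\mu\times\nu)(A\times B)=\mu(A)\nu(B)$ for $A\in\mathcal X$, $B\in\mathcal Y$: the inequality ``$\le$'' is witnessed by the cover $\{A\times B\}$, and for ``$\ge$'' one takes any cover $A\times B\subset\bigcup_k A_k\times B_k$, observes that $x\in A$ forces $B\subset\bigcup_{k:\,x\in A_k}B_k$, hence $\mathbf 1_A(x)\nu(B)\le\sum_k\mathbf 1_{A_k}(x)\nu(B_k)$ for every $x$ by countable subadditivity, and integrates this inequality over $\mathsf X$ against $\mu$ (monotone convergence on the right), obtaining $\mu(A)\nu(B)\le\sum_k\mu(A_k)\nu(B_k)$; then takes the infimum. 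I would also note the standard Method-I Carath\'eodory fact that the infimum in the definition is countably additive on $\mathcal X\otimes\mathcal Y$ — every measurable rectangle is Carath\'eodory-measurable because the set difference of two rectangles is a finite disjoint union of rectangles — so $\mu\times\nu$ really is a measure on $\mathcal X\otimes\mathcal Y$, and in particular a rectangle with one factor of measure zero is $\mu\times\nu$-null whatever the other factor is.

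Next I would reduce to the case $f=\mathbf 1_E$ with $(\mu\times\nu)(E)<\infty$. If $f$ is integrable then $\{|f|>1/n\}$ has finite measure for each $n$, so the support $\{f\ne0\}$ is $\sigma$-finite; thus in both hypothesized cases the support is a countable union of finite-measure sets. A nonnegative measurable $f$ with $\sigma$-finite support is the increasing limit of simple functions each supported on a finite-measure set (truncate, and multiply by the indicator of the finite-measure pieces), so monotone convergence and additivity over such simple functions reduce the nonnegative case to indicators; the integrable case then follows by applying the result to $f^+$ and $f^-$ and subtracting, the inner integrals of $f^\pm$ being finite almost everywhere since their $\mu\times\nu$-integrals are. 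Thus it suffices to prove, for $E\in\mathcal X\otimes\mathcal Y$ with $(\mu\times\nu)(E)<\infty$, that $x\mapsto\nu(E_x)$ and $y\mapsto\mu(E^y)$ agree almost everywhere with measurable functions and that $\int\nu(E_x)\,d\mu(x)=(\mu\times\nu)(E)=\int\mu(E^y)\,d\nu(y)$. (The ``almost everywhere'' is genuinely needed: $x\mapsto\nu(E_x)$ can fail to be $\mathcal X$-measurable even when $(\mu\times\nu)(E)=0$, so the inner integrals in the theorem must be understood as defined only up to a null set.)

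For this, fix a cover $E\subset\bigcup_k A_k\times B_k$ with $\sum_k\mu(A_k)\nu(B_k)<\infty$. Since each summand is finite, each rectangle is \emph{good} ($\mu(A_k)<\infty$ and $\nu(B_k)<\infty$), or has $\nu(B_k)=0$, or has $\mu(A_k)=0$. Let $\tilde A$ (resp.\ $\tilde B$) be the union of the $A_k$ (resp.\ $B_k$) over the good indices, so $\mu$ is $\sigma$-finite on $\tilde A$ and $\nu$ is $\sigma$-finite on $\tilde B$; put $E_\ast=E\cap(\tilde A\times\tilde B)$ and $E'=E\setminus(\tilde A\times\tilde B)$, a disjoint decomposition. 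Every point of $E'$ lies in a rectangle with $\nu(B_k)=0$ or with $\mu(A_k)=0$: the union $U_1$ of the former is $\mu\times\nu$-null and has $(U_1)_x$ a $\nu$-null set for every $x$, while the union $U_2$ of the latter is $\mu\times\nu$-null and has $(U_2)_x=\emptyset$ for every $x$ outside the $\mu$-null set $\bigcup\{A_k:\mu(A_k)=0\}$. Hence $(\mu\times\nu)(E')=0$, $\nu(E'_x)=0$ for $\mu$-a.e.\ $x$, and symmetrically $\mu((E')^y)=0$ for $\nu$-a.e.\ $y$, so $E'$ drops out of all three integrals. On the measurable space $\tilde A\times\tilde B$ the restriction of $\mu\times\nu$ is $\sigma$-finite and agrees on the $\pi$-system of rectangles with the ordinary product of $\mu|_{\tilde A}$ and $\nu|_{\tilde B}$ (by the rectangle identity), hence equals it; the classical Fubini--Tonelli theorem then gives the sub-claim for $E_\ast$, and since the sections of the disjoint union $E=E_\ast\sqcup E'$ add, the sub-claim follows for $E$, and then — via the reductions above — the full statement.

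The step I expect to be the main obstacle is the splitting in the previous paragraph: for genuinely non-$\sigma$-finite $\mu$ and $\nu$ one must check not merely that the remainder $E'$ is $\mu\times\nu$-null but that its $x$-sections are $\nu$-null for $\mu$-a.e.\ $x$ and its $y$-sections $\mu$-null for $\nu$-a.e.\ $y$. This is precisely the feature that fails without the $\sigma$-finite-support hypothesis — take Lebesgue measure against counting measure on the diagonal of $[0,1]^2$, where the two iterated integrals come out $0$ and $1$ — and it is what forces the classification of the covering rectangles into the good, the $\nu(B_k)=0$, and the $\mu(A_k)=0$ types. A secondary nuisance is the measurability of the inner-integral maps in this generality, handled by the ``almost everywhere'' reading noted above, together with the routine verifications that $\mu\times\nu$ is a bona fide measure on $\mathcal X\otimes\mathcal Y$ and restricts on $\tilde A\times\tilde B$ to the classical product.
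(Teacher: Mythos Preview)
Your proposal is correct and carefully structured. The rectangle identity argument, the reduction to indicators of sets of finite $\mu\times\nu$-measure, the classification of covering rectangles into ``good'' ($\mu(A_k),\nu(B_k)<\infty$) and null types, and the resulting decomposition $E=E_\ast\sqcup E'$ all work as you describe; your observation that the section maps may only be measurable up to a null set, and must be read that way, is exactly the right subtlety to flag in this non-$\sigma$-finite setting.

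There is, however, nothing substantive to compare against in the paper itself: its entire proof of this Fubini--Tonelli statement is the single sentence ``Follows from \citep{mukherjea1972}.'' The paper treats the result as a black box imported from Mukherjea's 1972 \emph{Pacific J.\ Math.}\ article and supplies no argument of its own. Your self-contained sketch therefore goes well beyond what the paper offers; what you have written is effectively a proof of the cited result rather than a comparison with the paper's reasoning.
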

\begin{proof}
  Follows from \citep{mukherjea1972}.
\end{proof}

If a pair of random variables
$(X,Y):\Omega\to\mathsf{X}\times\mathsf{Y}$ has a joint density
$p(x,y)$ w.r.t.\ $\mu\times\nu$, then we can apply Fubini's theorem to
write the marginal distributions as
\begin{eqnarray*}
  P_X(U) &=& P_{X,Y}(U\times \mathsf{Y}) = \int_U\left[\int p(x,y)d\nu(y)\right]d\mu(x),\\
  P_Y(V) &=& P_{X,Y}(\mathsf{X}\times V) = \int_V\left[\int p(x,y)d\mu(x)\right]d\nu(y),
\end{eqnarray*}
which implies that $p_X(x) = \int p(x,y)d\nu(y)$ and $p_Y(y) = \int
p(x,y)d\mu(x)$ are marginal densities w.r.t.\ $\mu$ and $\nu$,
respectively.

A \emph{transition measure} from $(\mathsf{Y},\mathcal{Y})$ to
$(\mathsf{X},\mathcal{X})$ is any function $\mu:
\mathsf{Y}\times\mathcal{X}\to[0,\infty]$ satisfying the following
axioms:
\begin{enumerate}
\item for every $y\in \mathsf{Y}$, the function $S\mapsto \mu(y,S)$ is a
measure on $\mathcal{X}$,
\item for every $S\in\mathcal{X}$, the function $y\mapsto \mu(y,S)$ is
$\mathcal{Y}$-measurable.
\end{enumerate}
The product of a transition measure
$\mu:\mathsf{Y}\times\mathcal{X}\to[0,\infty]$ and a $\sigma$-finite
measure $\nu:\mathcal{Y}\to[0,\infty]$ is given by
\[
(\mu\times\nu)(S) := \int \mu(y,S_y)d\nu(y)
\]
for all $S \in \mathcal{X}\otimes\mathcal{Y}$, where $S_y :=
\{\,x:(x,y)\in S\,\}$.  The product is a measure on
$\mathcal{X}\otimes\mathcal{Y}$.
%
If a transition measure $P_{X\mid Y}:
\mathsf{Y}\times\mathcal{X}\to[0,\infty]$ satisfying
\[
P_{X,Y} = P_{X\mid Y}\times P_Y
\]
exists, then it is called a \emph{conditional distribution} of $X$
given $Y$.  We will also use the shorthand $P_{X\mid y} := P_{X\mid
Y}(y, \cdot)$.  Note that a conditional distribution always exists for
a random variable in $(\mathbb{R}^n,\mathcal{B}(\mathbb{R}^n))$,
$(\mathbb{R}^\infty, \mathcal{B}(\mathbb{R}^\infty))$, or any other
complete separable metric space, but there are spaces where its
existence is not guaranteed \citep{shiryaev1996}.

If a conditional distribution $P_{X\mid Y}$ exists and satisfies
\begin{equation*}\label{eq:conddens}
P_{X\mid y}(S) = \int_S p(x\mid y)d\mu(x)
\end{equation*}
for all $S\in\mathcal{X}$, $y\in\mathsf{Y}$ for some measurable
nonnegative function $(x,y)\mapsto p(x\mid y)$ and some measure $\mu$,
then $p(x\mid y)$ is called a \emph{conditional density} of $X$ given
$y$.  If a joint density $p(x,y)$ exists w.r.t.\ $\mu\times\nu$, then
a conditional density can always be obtained by
\begin{equation*}\label{eq:jointcond}
p(x\mid y) := \begin{cases}
  \displaystyle\frac{p(x,y)}{p(y)},& p(y)>0,\\
  0, & p(y) = 0.
\end{cases}
\end{equation*}
(The value chosen for $p(y)=0$ is immaterial as the conditional
density is only determined $\mu\times P_Y$-a.e.)

\section{Regularity conditions for Bayesian estimation}
\label{sec:conds}

The following theorem gives a set of equivalent conditions under which
we can avoid the potential problems of nonexistent distributions or
densities.  
\begin{theorem}\label{xythm}
  Let $(X,Y):\Omega\to\mathsf{X}\times\mathsf{Y}$ be a pair of random
  variables.  Then, the following are equivalent:
  \begin{enumerate}
    \item $X$ and $Y$ have a joint density w.r.t.\ a product measure
      $\mu\times\nu$,
    \item $P_{X,Y}\ll P_X\times P_Y$,
    \item $X$ has a conditional density $p(x\mid y)$ w.r.t.\ a
      $\sigma$-finite measure $\mu$,
    \item $X$ has a conditional distribution $P_{X\mid Y}$ such that
      $P_{X\mid y} \ll P_X$ for all $y$,
    \item $X$ has a conditional distribution $P_{X\mid Y}$ and a
      marginal density $p(x)$ w.r.t.\ a measure $\mu$ such that
      $P_{X\mid y}\ll\mu$ for all $y$.
  \end{enumerate}
  Obviously the same conditions with the roles of $X$ and $Y$ reversed
  are also equivalent.  Furthermore,
  \begin{enumerate}
    \item[6.] if the above conditions hold for $X$ and $Y$, then they
      also hold for $X' = F(X)$ and $Y' = G(Y)$ where $F:
      \mathsf{X}\to\mathsf{X}'$ and $G:\mathsf{Y}\to\mathsf{Y}'$ are
      any measurable functions.
  \end{enumerate}
\end{theorem}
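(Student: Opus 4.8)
The plan is to route all the equivalences through conditions (1) and (2), which are the most amenable to measure-theoretic manipulation, by proving $(2)\Rightarrow(1)$, $(1)\Rightarrow(3)\Rightarrow(1)$, $(1)\Rightarrow(4)\Rightarrow(2)$, and $(1)\Rightarrow(5)\Rightarrow(1)$. Of these, $(2)\Rightarrow(1)$ is immediate: $P_X\times P_Y$ is a product of two probability measures, hence $\sigma$-finite, so the Radon--Nikodym theorem turns $P_{X,Y}\ll P_X\times P_Y$ into a density, which is by definition a joint density of $(X,Y)$ w.r.t.\ the product measure $P_X\times P_Y$.

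A preliminary reduction to the $\sigma$-finite setting underlies most of what follows. If $p(x,y)$ is a joint density w.r.t.\ $\mu\times\nu$, then, since $P_{X,Y}$ is finite, each level set $\{p>1/n\}$ has finite $(\mu\times\nu)$-measure, so $\{p>0\}$ is $\sigma$-finite w.r.t.\ $\mu\times\nu$; the marginal densities $p_X(x)=\int p(x,y)\,d\nu(y)$ and $p_Y(y)=\int p(x,y)\,d\mu(x)$ are then well defined by Fubini--Tonelli over $\{p>0\}$, and their supports $\{p_X>0\}$, $\{p_Y>0\}$ are $\sigma$-finite w.r.t.\ $\mu$, $\nu$. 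Replacing $\mu,\nu$ by their restrictions to $\{p_X>0\}$, $\{p_Y>0\}$ leaves $p$ a joint density and makes $\mu,\nu$ $\sigma$-finite. Granting this, $(1)\Rightarrow(3)$ follows from the standard construction $p(x\mid y):=p(x,y)/p_Y(y)$ on $\{p_Y>0\}$ (and $0$ elsewhere), whose transition-measure property and the identity $P_{X,Y}=P_{X\mid Y}\times P_Y$ are verified via Fubini--Tonelli; $(3)\Rightarrow(1)$ is easy, since a conditional density $p(x\mid y)$ w.r.t.\ a $\sigma$-finite $\mu$ is, using $P_{X,Y}=P_{X\mid Y}\times P_Y$ and Fubini--Tonelli, a joint density w.r.t.\ the product measure $\mu\times P_Y$; and $(4)\Rightarrow(2)$ is a one-liner, since $(P_X\times P_Y)(S)=0$ forces $P_X(S_y)=0$ for $P_Y$-a.e.\ $y$, whence $P_{X\mid y}(S_y)=0$ by $P_{X\mid y}\ll P_X$ and so $P_{X,Y}(S)=\int P_{X\mid y}(S_y)\,dP_Y(y)=0$.

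The real difficulty, and the step I expect to be the main obstacle, is the ``for all $y$'' quantifier in (4) and (5), which a generic conditional distribution need not meet and which must be arranged by modifying one on a null set. For $(1)\Rightarrow(4)$ and $(1)\Rightarrow(5)$ I would replace $p$ by $\tilde p(x,y):=p(x,y)\,\mathbf 1[p_X(x)>0]$ (which equals $p$ off a $(\mu\times\nu)$-null set, with the same marginal density $p_X$), set $\tilde p_Y(y):=\int\tilde p(x,y)\,d\mu(x)$, and define $P_{X\mid y}(S):=\int_S \tilde p(x,y)/\tilde p_Y(y)\,d\mu(x)$ where $\tilde p_Y(y)>0$ and $P_{X\mid y}:=P_X$ where $\tilde p_Y(y)=0$. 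Measurability of $\tilde p_Y$ makes this a transition measure; since $\{\tilde p_Y=0\}$ is $P_Y$-null and $\tilde p_Y=p_Y$ holds $\nu$-a.e., one still has $P_{X,Y}=P_{X\mid Y}\times P_Y$; and by construction every $P_{X\mid y}$ is carried by $\{p_X>0\}$ and dominated by $\mu$, giving $P_{X\mid y}\ll\mu$ (so (5) holds, with marginal density $p_X$) and $P_{X\mid y}\ll P_X$ (so (4) holds, since a $P_X$-null set meets $\{p_X>0\}$ in a $\mu$-null set). For the converse $(5)\Rightarrow(1)$ I would do the symmetric surgery on the given $P_{X\mid Y}$: the set $B:=\{y:P_{X\mid y}(\{p_X=0\})>0\}$ is measurable by the second transition-measure axiom and $P_Y$-null because marginalizing $P_X(\{p_X=0\})=0$ over $y$ gives $P_{X\mid y}(\{p_X=0\})=0$ for $P_Y$-a.e.\ $y$; resetting $P_{X\mid y}:=P_X$ on $B$ keeps a conditional distribution in which every $P_{X\mid y}$ is carried by $\{p_X>0\}$ and dominated by $\mu$, so, since $\mu$ restricted to $\{p_X>0\}$ is $\sigma$-finite, the marginalization argument from $(4)\Rightarrow(2)$ shows $P_{X,Y}$ is absolutely continuous w.r.t.\ the $\sigma$-finite product measure $(\mu|_{\{p_X>0\}})\times P_Y$, and Radon--Nikodym produces the joint density.

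The ``Furthermore'' statements then come essentially for free. Condition (2) in the form $P_{X,Y}\ll P_X\times P_Y$ is unchanged under swapping $X$ and $Y$ (identifying $\mathsf X\times\mathsf Y$ with $\mathsf Y\times\mathsf X$), which gives the role-reversal. For (6), one has $P_{X',Y'}=(F\times G)_*P_{X,Y}$ and $P_{X'}\times P_{Y'}=(F\times G)_*(P_X\times P_Y)$ (both sides are probability measures agreeing on measurable rectangles), so $(P_{X'}\times P_{Y'})(S)=0$ implies $(P_X\times P_Y)\big((F\times G)^{-1}S\big)=0$, hence $P_{X,Y}\big((F\times G)^{-1}S\big)=0$ by (2), i.e.\ $P_{X',Y'}(S)=0$; thus (2) holds for $(X',Y')$, and therefore so do all the equivalent conditions.
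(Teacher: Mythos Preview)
Your proof is correct, but the routing differs from the paper's. The paper proves a single cycle $2\Rightarrow5\Rightarrow4\Rightarrow3\Rightarrow1\Rightarrow2$ (plus $2\Rightarrow6$), whereas you use condition~(1) as a hub with spokes $2\Rightarrow1$, $1\Leftrightarrow3$, $1\Rightarrow4\Rightarrow2$, $1\Rightarrow5\Rightarrow1$. The main technical difference is in how $1\Rightarrow2$ is obtained: the paper gives a direct argument---given $(P_X\times P_Y)(S)=0$, it strips off the null set $N=(\{p_X=0\}\times\mathsf Y)\cup(\mathsf X\times\{p_Y=0\})$, observes that $\mu\times\nu$ is $\sigma$-finite on $S\setminus N$, and concludes $(\mu\times\nu)(S\setminus N)=0$ from $\int_{S\setminus N}p_Xp_Y\,d(\mu\times\nu)=0$, never constructing a conditional distribution. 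You instead reach~(2) via~(4), which forces you to build an explicit $P_{X\mid Y}$ with $P_{X\mid y}\ll P_X$ for \emph{every} $y$; this is more work but more constructive. Your null-set surgery in $5\Rightarrow1$ is essentially the paper's $5\Rightarrow4$ argument, followed by an appeal to Radon--Nikodym on the $\sigma$-finite product $\mu|_{\{p_X>0\}}\times P_Y$ rather than the paper's continuation $4\Rightarrow3\Rightarrow1$. Your preliminary reduction of $\mu,\nu$ to their $\sigma$-finite restrictions on $\{p_X>0\},\{p_Y>0\}$ is a clean device that the paper handles implicitly through its Fubini--Tonelli hypothesis on $\sigma$-finite supports. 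Both the role-reversal remark and your proof of~(6) match the paper's.
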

The conditions of the theorem are mild, being satisfied whenever
either $X$ or $Y$ is discrete as well as in most practical situations
with continuous random variables.  However, they preclude in
particular the following example:
\begin{example} \label{ex1}
Suppose that $X = Y \sim \mathrm{Uniform}[0,1]$.  The conditional
distribution $P_{X\mid y}(S) = [y\in S]$ is singular w.r.t.\ $P_X =
m_{[0,1]}$, where $m_{[0,1]}$ denotes the restriction of the Lebesgue
measure to $[0,1]$, and so condition 4 of Theorem~\ref{xythm} is not
satisfied.  The conditional density
\[
p(x\mid y) = [x = y] := \begin{cases}1,&x=y,\\0,&x\ne y\end{cases}
\]
exists w.r.t.\ the counting measure, but this measure is not
$\sigma$-finite and so this density does not satisfy condition 3.
Even though the joint distribution can be written as
\[
P_{X,Y}(S) = \int \left[\int_{S_y} [x=y] d\#(x)\right] dm_{[0,1]}(y),
\]
where $\#$ is the counting measure, the integrand $[x=y]$ does not
yield the joint density of condition~1 because the function $[x=y]$ is
not integrable w.r.t.\ $\#\times m_{[0,1]}$ and so Fubini's theorem
does not hold for the iterated integral.
\end{example}

\begin{example}
One interpretation of the conditions of Theorem~\ref{xythm} is given
by the fact that the Radon-Nikod\'ym derivative in the
measure-theoretic definition of mutual information
\[
\I(X;Y) = \int dP_{X,Y}\log\frac{dP_{X,Y}}{d(P_X\times P_Y)}
\]
exists precisely when $P_{X,Y}\ll P_X\times P_Y$ (condition 2).  In
case $P_{X,Y}$ is singular w.r.t.\ $P_X\times P_Y$,
\citet{kolmogorov1956} defines $\I(X;Y)=\infty$.  Thus, failure of the
conditions of Theorem~\ref{xythm} implies that observation of $Y$ is
expected to give an infinite amount of information about $X$ (and,
symmetrically, $X$ is expected to give an infinite amount of
information about $Y$).  In Example~\ref{ex1}, observation of $Y$
gives complete information about $X$ and this information is obviously
infinite (it would take an infinite number of bits on the average to
transmit the precise value of $X\sim\mathrm{Uniform}[0,1]$).  On the
other hand, if either $X$ or $Y$ has only a finite number of possible
values, then there is only a finite amount of information that can be
gained about it; this implies $\I(X;Y)<\infty$, and so condition 2 of
Theorem~\ref{xythm} is necessarily satisfied.
\end{example}

\subsection{Bayes' theorem}

The conditions of Theorem~\ref{xythm} are precisely those under which
Bayes' theorem can be applied to a conditional density:
\begin{theorem}\label{bayes}
  Let $(X,Y):\Omega\to\mathsf{X}\times\mathsf{Y}$ be a pair of random
  variables and suppose that $p(y\mid x)$ is a conditional density of
  $Y$ given $X$ w.r.t.\ to a measure $\nu$.  Then the following are
  equivalent:
  \begin{enumerate}
    \item[(a)] $X$ and $Y$ satisfy the conditions of
    Theorem~\ref{xythm}.
    \item[(b)] There exists a measurable subset $V\subset\mathsf{Y}$
      such that the measure $\nu'(B):=\nu(B\cap V)$ is
      $\sigma$-finite, $p(y\mid x)$ is a conditional density of $Y$
      given $X$ w.r.t.\ $\nu'$, and
      \[
      p(y) := \int p(y\mid x)dP_X(x)
      \] is a marginal density of $Y$ w.r.t.\ 
      $\nu'$.\footnote{\label{semifinite} If $\nu$ is
	\emph{semifinite} (for every nonnull $B\in\mathcal{Y}$ there
	exists $B'\subset B$ such that $0<\nu(B')<\infty$), then
	$p(y)$ is a density of $Y$ w.r.t.\ the original measure $\nu$,
	too.  
      }
    \item[(c)] Bayes' formula
      \begin{equation*}
	P_{X\mid y}(S) := \frac{\int_S p(y\mid x)dP_X(x)}
	{\int p(y\mid x)dP_X(x)}
      \end{equation*}
      defines a conditional distribution of $X$ given $Y$.
    \item[(c')] If $p(x)$ is a marginal density of $X$ w.r.t.\ a
      measure $\mu$, then 
      \begin{equation*}
	p(x\mid y) := \frac{p(y\mid x)p(x)}{\int p(y\mid x)p(x)d\mu(x)}
      \end{equation*}
      is a conditional density of $X$ given $Y$ w.r.t.~$\mu$.
  \end{enumerate}
\end{theorem}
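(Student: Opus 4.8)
The plan is to prove the cycle of implications $(a)\Rightarrow(b)\Rightarrow(c)\Rightarrow(a)$, together with the separate equivalence $(c)\Leftrightarrow(c')$, since (c) and (c') are essentially the same statement once one has a marginal density $p(x)$ in hand (and Theorem~\ref{xythm} guarantees one exists, given (a)). The conceptual core is the identity
\[
P_{X,Y} = P_{Y\mid X}\times P_X,
\]
which, written out against the conditional density $p(y\mid x)$, says that for every measurable rectangle (and hence every $S\in\mathcal X\otimes\mathcal Y$)
\[
P_{X,Y}(S) = \int\left[\int_{S_x} p(y\mid x)\,d\nu(y)\right]dP_X(x).
\]

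\textbf{From (a) to (b).} Assuming the conditions of Theorem~\ref{xythm}, there is a joint density $p(x,y)$ w.r.t.\ some product measure $\mu\times\nu_0$, hence $P_{X,Y}\ll P_X\times \nu_0$ after integrating out; more to the point, I would show the support $\{p(y)>0\}$ — where $p(y):=\int p(y\mid x)dP_X(x)$ — carries $P_Y$, and on that set $\nu$ restricted to it is $\sigma$-finite. The key move: by the Fubini--Tonelli theorem quoted above, $P_Y(B) = \int_B p(y)\,d\nu(y)$ \emph{provided} the relevant integrand has $\sigma$-finite support, so I must first exhibit a set $V$ on which everything is $\sigma$-finite and off which $p(y\mid x)=0$ for $P_X$-a.e.\ $x$. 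Using (a) to get a genuine joint density is what makes this $V$ available: the $\sigma$-finite support of the joint density projects to a suitable $V\subset\mathsf Y$. Then $p(y\mid x)$ is still a conditional density w.r.t.\ $\nu'=\nu(\cdot\cap V)$ (it vanishes a.e.\ outside $V$), and $p(y)$ is a marginal density of $Y$ w.r.t.\ $\nu'$ by Fubini applied to $p(y\mid x)$ over the now-$\sigma$-finite $\nu'$.

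\textbf{From (b) to (c) and to (c').} Once $\nu'$ is $\sigma$-finite and $p(y)=\int p(y\mid x)dP_X(x)$ is a marginal density of $Y$ w.r.t.\ $\nu'$, the set $N=\{p(y)=0\}$ is $P_Y$-null, so $\int p(y\mid x)dP_X(x)>0$ for $P_Y$-a.e.\ $y$ and the ratio defining $P_{X\mid y}(S)$ is well-defined a.e.; I assign it arbitrarily on $N$. Measurability in $y$ follows from Fubini (the numerator is measurable in $y$ as an integral of a jointly measurable function) and countable additivity in $S$ from monotone convergence, so $P_{X\mid Y}$ is a transition measure. To verify $P_{X\mid Y}\times P_Y = P_{X,Y}$, I compute, for a rectangle $S\times B$,
\[
\int_B P_{X\mid y}(S)\,dP_Y(y) = \int_B \frac{\int_S p(y\mid x)dP_X(x)}{p(y)}\,p(y)\,d\nu'(y) = \int_B\int_S p(y\mid x)\,dP_X(x)\,d\nu'(y),
\]
and by Fubini (legitimate since $\nu'$ is $\sigma$-finite and the integrand nonnegative) this equals $\int_S\int_B p(y\mid x)\,d\nu(y)\,dP_X(x) = (P_{Y\mid X}\times P_X)(S\times B) = P_{X,Y}(S\times B)$; a $\pi$-$\lambda$ argument extends this to all of $\mathcal X\otimes\mathcal Y$. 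This gives (c). For (c'), given any marginal density $p(x)$ of $X$ w.r.t.\ $\mu$, one has $dP_X = p(x)\,d\mu$, so $\int_S p(y\mid x)dP_X(x) = \int_S p(y\mid x)p(x)\,d\mu(x)$ and the formula in (c') is literally $P_{X\mid y}(S) = \int_S p(x\mid y)\,d\mu(x)$ with $p(x\mid y)$ as displayed — i.e.\ a conditional density w.r.t.\ $\mu$. Conversely, integrating that density recovers (c), so (c)$\Leftrightarrow$(c').

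\textbf{From (c) to (a).} If Bayes' formula yields a conditional distribution $P_{X\mid Y}$, then by construction $P_{X\mid y}(S) = \int_S \frac{p(y\mid x)}{p(y)}\,dP_X(x)$ for $P_Y$-a.e.\ $y$, which exhibits $P_{X\mid y}\ll P_X$ for those $y$; redefining $P_{X\mid y}$ on the $P_Y$-null exceptional set to be, say, $P_X$ itself keeps it a conditional distribution and makes $P_{X\mid y}\ll P_X$ hold for \emph{all} $y$ — that is condition~4 of Theorem~\ref{xythm}, so (a) holds. Claim~6 (stability under measurable $F,G$) I would handle via condition 2: $P_{X',Y'}$ is the pushforward of $P_{X,Y}$ under $F\times G$, $P_{X'}\times P_{Y'}$ is the pushforward of $P_X\times P_Y$, and absolute continuity is preserved by pushforward, so $P_{X',Y'}\ll P_{X'}\times P_{Y'}$.

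\textbf{Main obstacle.} The delicate point — and the whole reason the theorem is nontrivial, as Example~\ref{ex1} shows — is the passage through the possibly non-$\sigma$-finite measure $\nu$: a conditional density $p(y\mid x)$ w.r.t.\ $\nu$ is \emph{assumed}, but nothing a priori guarantees that $p(y)=\int p(y\mid x)dP_X(x)$ integrates back to $P_Y$ against $\nu$, precisely because Fubini can fail without $\sigma$-finiteness. Producing the set $V$ cutting $\nu$ down to a $\sigma$-finite $\nu'$ that still supports $p(y\mid x)$ (for $P_X$-a.e.\ $x$) is the heart of the argument, and it is exactly here that hypothesis (a) — equivalently, the existence of an honest joint density with $\sigma$-finite support — must be invoked; without it the implication genuinely fails. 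The semifiniteness footnote is then a routine check that on a semifinite $\nu$ one may always take $V$ of full measure, so $p(y)$ is a density w.r.t.\ $\nu$ itself.
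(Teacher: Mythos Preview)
Your overall structure---proving the cycle $(a)\Rightarrow(b)\Rightarrow(c)\Rightarrow(a)$ with $(c)\Leftrightarrow(c')$ on the side---matches the paper exactly, and your arguments for $(b)\Rightarrow(c)$, $(c)\Leftrightarrow(c')$, and $(c)\Rightarrow(a)$ are essentially the paper's own.

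The gap is in $(a)\Rightarrow(b)$, specifically in the production of the set $V$. You offer two candidates: the set $\{p(y)>0\}$, and the $\mathsf{Y}$-projection of the $\sigma$-finite support of a joint density furnished by condition~1 of Theorem~\ref{xythm}. Neither works as stated. For the first: the example immediately following Theorem~\ref{bayes} in the paper has (a) holding, yet $\{y:p(y)>0\}=[0,1]$ and $\nu$ is \emph{not} $\sigma$-finite on $[0,1]$ (any countable cover of $(0,1]$ by measurable sets must contain a non-meagre piece, which receives $\nu$-measure $\infty$). So your claim that ``on that set $\nu$ restricted to it is $\sigma$-finite'' is false in general. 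For the second: the joint density coming from condition~1 is with respect to some $\mu_0\times\nu_0$, so its support is $\sigma$-finite for $\mu_0\times\nu_0$ and its projection is $\sigma$-finite for $\nu_0$; nothing in your sketch connects this to $\sigma$-finiteness for the \emph{given} $\nu$, which is what (b) demands.

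The paper's route is genuinely different and is the real content of the step: one works inside $\nu$ from the start, taking $V$ to realize the supremum $M:=\sup_{V'}\int_{V'}p(y)\,d\nu(y)$ over the $\sigma$-ideal of $\nu$-$\sigma$-finite sets $V'$ (the supremum is attained by a countable union of a maximizing sequence). Hypothesis (a) enters as $P_{X,Y}\ll P_X\times P_Y$ to show that $N=\{p(y)=0\}$ carries no $P_{X,Y}$-mass inside the support of $p(y\mid x)$; maximality of $V$ then forces $\nu$ and $\nu'=\nu(\cdot\cap V)$ to agree on every $\nu$-$\sigma$-finite set outside $N$, and since each section $\{y:p(y\mid x)>0\}$ is $\nu$-$\sigma$-finite, this is exactly enough to transfer the conditional-density property of $p(y\mid x)$ from $\nu$ to $\nu'$. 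Your diagnosis that ``producing the set $V$\dots is the heart of the argument'' is spot on; what is missing is this supremum construction over $\nu$-$\sigma$-finite sets, without which the step does not go through.
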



The following example shows that in some pathological cases, it is
possible that (b) holds as stated above, but $p(y)$ is not a density
of $Y$ w.r.t.\ the original measure $\nu$.
\begin{example}
  Let $C\subset[0,1]$ be a meagre set with positive Lebesgue measure
  (e.g., a fat Cantor set) and define
  \[
  S := \{\,(x,y)\in[0,1]^2: x+y \in C \text{ or } x+y-1 \in C\,\}
  \]
  so that every section of $S$ is a cyclically shifted version of $C$.
  Let $P_X$ be the restriction of the Lebesgue measure to $[0,1]$, and
  define $P_{Y\mid x}$ through the conditional density $p(y\mid x) =
  [y\in S_x\cup\{0\}]$ w.r.t.\ the measure
  \[
  \nu(B) := [0\in B] + \begin{cases}
    0, &\text{$B$ meagre},\\
    \infty, &\text{otherwise}.
  \end{cases}
  \]
  As the meagre sets form a $\sigma$-ideal, this definition indeed
  yields a countably additive measure.  As every $S_x$ is meagre, we
  obtain
  \[
  \begin{split}
    P_{X,Y}(R) &= \int\left[\int p(y\mid x)d\nu(y)\right]dP_X(x) \\
               &= \int\nu(R_x\cap(S_x\cup\{0\}))dP_X(x)\\
               &= \int[0\in R_x]dP_X(x)\\
               &= P_X(\{\,x:(x,0)\in R\,\}),
  \end{split}
  \]
  which is a well-defined joint distribution (yielding $(X,Y)$
  uniformly distributed on $[0,1]\times\{0\}$) and satisfies the
  conditions of Theorem~\ref{xythm}.  However, the function
  \[
  p(y) := \int p(y\mid x)dP_x = 
  \begin{cases}
    P_X(S_y) = P_X(C), &y>0,\\
    1, &y=0,
  \end{cases}
  \]
  is not a density of $Y$ w.r.t.\ $\nu$, because
  \[
  \int p(y)d\nu(y) 
  = 1\cdot\nu(\{0\}) + P_X(C)\cdot\nu(\left]0,1\right]) = 
      1 + P_X(C)\cdot\infty = \infty.
  \]
  Nonetheless, in accordance with Theorem~\ref{bayes}(b), $p(y)$ is a
  density w.r.t.\ the restriction of $\nu$ to the $\sigma$-finite set
  $\{0\}$.
\end{example}

\subsection{Adaptive sequential estimation}

In adaptive sequential estimation (see, e.g.,
\citealp{mackay1992ch4,kujalalukka2006,bestvalue}), a random variable
$\Theta$ is estimated based on a sequence $y_{x_1},\dots,y_{x_T}$ of
independent (given $\theta$) realizations from some conditional
densities $p(y_{x_t}\mid\theta)$ indexed by trial \emph{placements}
$x_t$, each of which can be adaptively chosen from some set
$\mathsf{X_t}\subset\mathsf{X}$ based on the outcomes
$\{y_{x_1},\dots,y_{x_{t-1}}\}$ of the earlier observations.  The
placement decision function $d: \{y_{x_1},\dots,y_{x_{t-1}}\}\mapsto
x_t$ can be deterministic or random, and we also assume that there
exists a special placement value that signals the end of the
experiment.  Thus, the outcome $Y_d = \{Y_{X_1},\dots,Y_{X_T}\}$ of a
whole experiment governed by the decision function $d$ can be
considered as a single random variable with a random number $T$ of
components.  It is natural to ask the following question: \emph{under
  what conditions do the whole-experiment outcome $Y_d$ and $\Theta$
  satisfy the conditions of Theorem~\ref{xythm}?}

If $Y_x$ and $\Theta$ satisfy the conditions of Theorem~\ref{xythm}
for all $x$, then one can apply Bayes' formula to any \emph{finite}
set $\mathbf{y} = \{y_{x_t}\}_{t=1}^T$ of results sequentially:
\[
p(\theta\mid\mathbf{y}) \propto p(\theta)p(y_{x_1}\mid\theta)\cdots
p(y_{x_T}\mid\theta).
\]
This implies that $P_{\Theta\mid\mathbf{y}}\ll P_\Theta$ for all
$\mathbf{y}$ (condition 4) and as this condition makes no reference to
the distribution of $\mathbf{y}$, it follows that regardless of the
decision function $d$, the whole-experiment outcome variable $Y_d$ has
a joint density with $\Theta$ provided that the experiment terminates
with probability one (so that $\mathbf{y}$ is almost surely finite).
However, if there is a positive probability that the experiment does
not terminate, then it is possible that no joint density of $\Theta$
and $Y_d$ exists, even for constant placements:

\begin{example}\label{ex:infinite}
  Suppose that $X \sim \mathrm{Uniform}[0,1]$ and the random variables
  $Y_t\in\{0,1\}$ for $t=1,2,\dots$ are defined as a binary
  representation of $X$.  Then, although the conditional density
  $p(x\mid y_1,\dots,y_T)$ w.r.t.\ the Lebesgue measure is
  well-defined for any finite set of observations, the full sequence
  of results $Y := \{Y_t\}_{t=1}^\infty$ cannot have any joint density
  with $X$, because by condition 6 of Theorem~\ref{xythm}, that would
  imply that also the transformed variable
  \[ Y' := F(Y) := \sum_{t=1}^\infty 2^{-t} Y_t
  \] 
  would have a joint density with $X = Y'$, which contradicts the
  negative result of Example~\ref{ex1}.
\end{example}

\if0
A typical strategy for deciding the placement $x_t$ is to maximize the
\emph{mutual information} $\I(\Theta;Y_x\mid
y_{x_1},\dots,y_{x_{t-1}})$ of $\Theta$ and the random result $Y_x$
(conditioned on the earlier observations) w.r.t.~$x$
\citep{mackay1992ch4,kujalalukka2006}.  In a well-defined
sense, mutual information is the amount of information that observing
the value of one random variable is expected to give about the other
\citep{kolmogorov1956,lindley1956,coverthomas1991}. It is remarkable
that the conditions of Theorem~\ref{xythm} are precisely those under
which the Radon-Nikod\'ym derivative in the measure-theoretic
definition of the mutual information
\[
\I(X;Y) = \int dP_{X,Y}\log\frac{dP_{X,Y}}{d(P_X\times P_Y)}
\]
exists\footnote{In case $P_{X,Y}$ is singular w.r.t.\ $P_X\times P_Y$,
  \citet{kolmogorov1956} defines $\I(X;Y)=\infty$.}  (condition 2).
  Therefore, even if one could work with conditional
  \emph{distributions} directly, without ever using Bayes' formula for
  conditional densities, if one's decision function is based on the
  mutual information, then $Y_x$ and $\Theta$ must still satisfy all
  the conditions of Theorem~\ref{xythm} for all $x$ for the problem to
  be well-defined.  Furthermore, the above discussion implies that if
  these conditions are satisfied, and the experiment terminates with
  probability one, the one can calculate the expected information gain
  $\I(\Theta;Y_d)$ of the whole sequential experiment given the
  decision function $d$.
\fi

\subsection{Proofs}
\begin{proof}[Proof of Theorem~\ref{xythm}]
\begin{description}
\item[2 $\Rightarrow$ 5:] Using the joint density $p := dP_{X,Y}
  /d(P_X\times P_Y)$, we obtain the induced marginal density $p(x)$
  w.r.t.\ the measure $P_X$ and the conditional density $p(x\mid y)$,
  which induces a conditional distribution $P_{X\mid y}\ll P_X$.
\item[5 $\Rightarrow$ 4:] Denoting $N := \{\,x\in\mathsf{X}:p(x) =
0\,\}$, we have
  \[ 
  0 = \int_N p(x)d\mu(x)=P_X(N)=\int P_{X\mid y}(N) dP_Y(y),
  \]
  which implies $P_{X\mid y}(N) = 0$ for $P_Y$-a.e.\ $y$.  However, as
  $P_{X\mid y}$ is only determined for $P_Y$-a.e.\ $y$, we are free to
  modify it so that $P_{X\mid y}(N) = 0$ for all $y$.  We will show
  that this $P_{X\mid y}$ is dominated by $P_X$ for all $y$.  Let
  $S\in\mathcal{X}$ be such that $P_X(S) = 0$.  Then, we have
  \[
  0 = P_X(S\setminus N) = \int_{S\setminus N} \underbrace{p(x)}_{>0}d\mu(x),
  \]
  which implies $\mu(S\setminus N) = 0$. As $P_{X\mid y}\ll\mu$, we
  have $P_{X\mid y}(S\setminus N) = 0$, but as also $P_{X\mid y}(N) =
  0$, we obtain $P_{X\mid y}(S) = 0$.  Thus, $P_{X\mid y}\ll P_X$ for
  all $y$.
\item[4 $\Rightarrow$ 3:] Choose $\mu = P_X$.
\item[3 $\Rightarrow$ 1:] By the definition of conditional density and
Fubini's theorem, we have
\[
P_{X,Y}(S) = \int\left[\int_{S_y}p(x\mid y)d\mu(x)\right]dP_Y(y) =
\int_S p(x\mid y)d(\mu\times P_Y)(x,y).
\]
Thus, $p(x\mid y)$ is a joint density of $X$ and $Y$ w.r.t.\
$\mu\times P_Y$.
\item[1 $\Rightarrow$ 2:] Suppose that $p(x,y)$ is a joint density
  w.r.t.\ $\mu\times\nu$ and let $S\in\mathcal{X}\otimes\mathcal{Y}$
  be an arbitrary measurable set such that $(P_X\times P_Y)(S) = 0$.
  We will show that then $P_{X,Y}(S) = 0$.  Denoting
  \begin{eqnarray*}
    U &:=& \{\,x\in \mathsf{X}: p(x) = 0\,\},\\
    V &:=& \{\,y\in \mathsf{Y}: p(y) = 0\,\},\\
    N &:=& (U\times \mathsf{Y})\cup (\mathsf{X}\times V),
  \end{eqnarray*}
  we have $P_X(U) = 0$ and $P_Y(V) = 0$.  Furthermore, as
  $\mu\times\nu$ is $\sigma$-finite on $S\setminus N$, Fubini's
  theorem yields
  \[
  0 = (P_X\times P_Y)(S\setminus N) = \int_{S\setminus N}
  \underbrace{p(x)}_{>0}\underbrace{p(y)}_{>0}d(\mu\times\nu)(x,y),
  \]
  which implies that $(\mu\times\nu)(S\setminus N) = 0$ and so
  $P_{X,Y}(S\setminus N) = 0$.  Thus,
  \[
  P_{X,Y}(S) \le P_{X,Y}(S\setminus N) + \underbrace{P_{X,Y}(U\times \mathsf{Y})}_{=P_X(U)} + \underbrace{P_{X,Y}(\mathsf{X}\times V)}_{=P_Y(V)} = 0.
  \]
\item[2 $\Rightarrow$ 6:] Suppose that $F: \mathsf{X}\to\mathsf{X}'$
  and $G:\mathsf{Y}\to\mathsf{Y'}$ are arbitrary measurable mappings.
  We show that $P_{X,Y}\ll P_X\times P_Y$ implies $P_{F(X),G(Y)}\ll
  P_{F(X)}\times P_{G(Y)}$.  For any $S\in\mathcal{X}\otimes\mathcal{Y}$,
  \[ 0 = (P_{F(X)}\times P_{G(Y)})(S) = (P_X\times
  P_Y)\left(\bigcup_{(x,y)\in S}F^{-1}(x)\times G^{-1}(y)\right)
  \]
  implies
  \[
  0 = P_{X,Y}\left(\bigcup_{(x,y)\in S}F^{-1}(x)\times G^{-1}(y)\right) = P_{F(X),G(Y)}(S),
  \]
  where $F^{-1}$ and $G^{-1}$ denote the preimage sets.\qedhere
\end{description}
\end{proof}

\begin{proof}[Proof of Theorem~\ref{bayes}]
  \textbf{(a) $\Rightarrow$ (b)} Denoting $S := \{\,(x,y)\in
  \mathsf{X}\times\mathsf{Y}:p(y\mid x)>0\,\}$, we obtain
  \[
  P_{X,Y}((\mathsf{X}\times\mathsf{Y})\setminus S) = 
  \int\left[\int_{\mathsf{Y}\setminus S_x}p(y\mid x)d\nu(y)\right]dP_X(x) = 0,
  \]
  which means that $S$ is a full set w.r.t.\ $P_{X,Y}$.  Denoting \[
  \begin{split}
  p(y) &:= \int p(y\mid x)dP_X(x),\\
  N &:= \{\,y\in\mathsf{Y}:p(y)=0\,\} = \{\,y\in\mathsf{Y}:P_X(S_y)=0\,\},
  \end{split}
  \]
  we have
  \[
  (P_X\times P_Y)(S\cap(\mathsf{X}\times N)) = \int_N P_X(S_y)dP_Y(y) = 0,
  \]
  and so the assumption $P_{X,Y}\ll P_X\times P_Y$ (condition 2)
  implies $P_{X,Y}(S\cap(\mathsf{X}\times N)) = 0$.

  Let $\mathcal{S}$ denote the class ($\sigma$-ideal) of all
  $V\in\mathcal{Y}$ such that $\nu$ is $\sigma$-finite on $V$.  Then,
  the supremum $M:=\sup_{V\in\mathcal{S}}\int_V p(y)d\nu(y)$ is
  obviously attained for some $V\in\mathcal{S}$, and for this $V$,
  Fubini's theorem yields
  \[
  \begin{split}
    M = \int_V p(y)d\nu(y) &= \int_V\left[\int p(y\mid x)dP_X(x)\right]d\nu(y)\\
    &= \int\left[\int_V p(y\mid x)d\nu(y)\right]dP_X(x) = P_Y(V),
  \end{split}
  \]
  implying that $M$ is finite.  As $M<\infty$ is the maximum value of
  the integral, we must have $\int_{B\setminus V}p(y)d\nu(y) = 0$ for
  any $B\in\mathcal{S}$, and so $\nu((B\setminus V)\setminus N) = 0$
  for any $B\in\mathcal{S}$.\footnote{This implies that $\nu(B)$ can
  only attain the values $0$ and $\infty$ for any measurable
  $B\subset\mathsf{Y}\setminus(V\cup N)$.  Hence, if $\nu$ is
  semifinite, only the value $0$ will be possible for these sets, and
  it follows that $\nu$ and $\nu'$ must agree on the support of
  $p(y)$.  This proves the statement of footnote~\ref{semifinite} on
  p.~\pageref{semifinite}.}  Thus, defining $\nu'(B) := \nu(B\cap V)$,
  we have $\nu'(B\setminus N) = \nu(B\setminus N)$ for any
  $B\in\mathcal{S}$.  As $p(y\mid x)$ is $\nu$-integrable for
  $P_X$-a.e.\ $x$, its support $S_x=\{\,y\in\mathcal{Y}:p(y\mid
  x)>0\,\}$ must belong to $\mathcal{S}$ for $P_X$-a.e.\ $x$.  It
  follows
  \[
  \begin{split}
  P_{X,Y}(R) &= P_{X,Y}((R\cap S)\setminus (\mathsf{X}\times N))\\
  &= \int\left[\int_{(R_x\cap S_x)\setminus N}p(y\mid x)d\nu(y)\right]dP_X(x)\\
  &= \int\left[\int_{(R_x\cap S_x)\setminus N}p(y\mid x)d\nu'(y)\right]dP_X(x)\\
  &\le \int\left[\int_{R_x}p(y\mid x)d\nu'(y)\right]dP_X(x)\\
  &\le \int\left[\int_{R_x}p(y\mid x)d\nu(y)\right]dP_X(x) = P_{X,Y}(R)
  \end{split}
  \]
  for all $R\in\mathcal{X}\otimes\mathcal{Y}$ and so $p(y\mid x)$ is a
  conditional density w.r.t.\ $\nu'$, too.  Furthermore, as $\nu'$ is
  $\sigma$-finite, Fubini's theorem yields
  \[
  \begin{split}
    \int_B p(y)d\nu'(y) &= \int_B\left[\int p(y\mid x)dP_X(x)\right]d\nu'(y)\\
    &= \int\left[\int_B p(y\mid x)d\nu'(y)\right]dP_X(x) = P_Y(B)
  \end{split}
  \]
  for all $B\in\mathcal{Y}$ and so $p(y)$ is a density of $Y$
  w.r.t.\ $\nu'$.

\textbf{(b) $\Rightarrow$ (c)} Assuming that $p(y)=\int p(y\mid
  x)dP_X(x)$ is a density of $Y$ w.r.t.\ a $\sigma$-finite measure
  $\nu'$, let us show that the the function $P_{X\mid y}$ defined by
  Bayes' formula is a well-defined conditional distribution.  Using
  the definitions and Fubini's theorem, we obtain
  \[
  \begin{split}
    \int \frac{\int_{S_y} p(y\mid x)dP_X(x)}{\int p(y\mid x)dP_X(x)}dP(y)
    &= \int\left[\int_{S_y} \frac{p(y\mid x)}{p(y)}dP_X(x)\right]dP_Y(y)\\
    &= \int\left[\int_{S_x} \frac{p(y\mid x)}{p(y)}dP_Y(y)\right]dP_X(x)\\
    &= \int\left[\int_{S_x} \frac{p(y\mid x)}{p(y)}p(y)d\nu'(y)\right]dP_X(x)\\
    &= \int\left[\int_{S_x}p(y\mid x)d\nu'(y)\right]dP_X(x) = P_{X,Y}(S).
  \end{split}
  \]

\textbf{(c') $\Leftrightarrow$ (c)} obvious.

\textbf{(c) $\Rightarrow$ (a)} As $P_{X\mid y}$ is given as an integral
  over $P_X$, condition 4 follows.
\end{proof}

\subsection{Generalization}

For completeness, we present a generalization of Theorem~\ref{xythm}
to more than two random variables.  To state the generalization, we
need another definition.
\begin{definition}
  A \emph{Bayes network} is a directed acyclic graph representing a
  dependency structure of a set $X_1,\dots,X_n$ of random
  variables. Each random variable $X_k$ is represented by a node whose
  parents are its conditioning variables
  $X_{j(k,1)},\dots,X_{j(k,n_k)}$, where we can assume WLOG that
  $j(k,i) < k$ for all $i=1,\dots,n_k$ (topological sorting), so that
  the joint distribution of $X_1,\dots,X_n$ is given by the product
  \[
  P_{X_1,\dots,X_n} = \prod_k P_{X_k\mid
    X_{j(k,1)},\dots,X_{j(k,n_k)}},
  \]
  where one can interpret, e.g., $P_{X_k\mid
  X_{j(k,1)},\dots,X_{j(k,n_k)}} = P_{X_k\mid X_1,\dots,X_{k-1}}$ and
  then apply the transition measure product operator.
\end{definition}

\begin{theorem}\label{xythm2}
  Let $X_1,\dots,X_n$ be random variables.  Then, the following are
  equivalent:
  \begin{enumerate}
    \item $X_1,\dots,X_n$ have a joint density,
    \item $P_{X_1,\dots,X_n}\ll P_{X_1}\times\dots\times P_{X_n}$,
    \item $P_{X_1,\dots,X_n}$ is representable as a Bayes network
        where each conditional distribution $P_{X_k\mid
        x_{j(k,1)},\dots,x_{j(k,n_k)}}$ has a density w.r.t.\ a
        $\sigma$-finite measure $\mu_k$,
    \item $P_{X_1,\dots,X_n}$ is representable as a Bayes network
      where each conditional distribution $P_{X_k\mid
      x_{j(k,1)},\dots,x_{j(k,n_k)}}$ is absolutely continuous
      w.r.t.~$P_{X_k}$.
    \item $P_{X_1,\dots,X_n}$ is representable as a Bayes network
      where each conditional distribution $P_{X_k\mid
      x_{j(k,1)},\dots,x_{j(k,n_k)}}$ is dominated by a measure
      $\mu_k$ w.r.t.\ which there exists a marginal density $p(x_k)$.
  \end{enumerate}
  Furthermore, 
  \begin{enumerate}
    \item[6.] if the above conditions hold for $X_1,\dots,X_n$, then
      they also hold for $X'_k = F_k(X_k)$, where $F_k:
      \mathsf{X}_k\to\mathsf{X}'_k$ are any measurable functions.
  \end{enumerate}
\end{theorem}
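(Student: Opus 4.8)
The plan is to prove the equivalence through the cycle $2\Rightarrow5\Rightarrow4\Rightarrow3\Rightarrow1\Rightarrow2$, re-running the proof of Theorem~\ref{xythm} once more with the two-fold product $\mu\times\nu$ replaced throughout by the $n$-fold product $\mu_1\times\dots\times\mu_n$ and the single conditional density replaced by the telescoping product $\prod_k p(x_k\mid x_{j(k,1)},\dots,x_{j(k,n_k)})$ running over the nodes of the Bayes network; statement~6 is disposed of at the end by the same pushforward computation as in Theorem~\ref{xythm}. The only genuinely new ingredient over the $n=2$ case is the Bayes-network bookkeeping that reduces an \emph{arbitrary} DAG representation to the sequence of pointwise statements that the $n=2$ argument manipulates; I will write $\mathrm{pa}(k)$ for the parent set of node~$k$.

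For $2\Rightarrow5$ I would set $p:=dP_{X_1,\dots,X_n}/d(P_{X_1}\times\dots\times P_{X_n})$ — the product of finitely many probability measures is finite, hence $\sigma$-finite, so this derivative and all marginalizations below are unproblematic — and build the \emph{chain} network in which $X_k$ is conditioned on $X_1,\dots,X_{k-1}$, taking $P_{X_k\mid x_1,\dots,x_{k-1}}$ to be the probability measure with density $p(x_1,\dots,x_k)/p(x_1,\dots,x_{k-1})$ w.r.t.\ $P_{X_k}$, where $p(x_1,\dots,x_j):=\int p\,dP_{X_{j+1}}\cdots dP_{X_n}$ is a marginal density obtained via Fubini. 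A telescoping computation together with Fubini gives $\prod_k P_{X_k\mid x_1,\dots,x_{k-1}}=P_{X_1,\dots,X_n}$, so this is a bona fide Bayes network; each conditional is $\ll P_{X_k}$ and carries the trivial marginal density $p(x_k)\equiv1$ w.r.t.\ $P_{X_k}$, which is condition~5. For $5\Rightarrow4$ I would argue node by node exactly as in the proof of $5\Rightarrow4$ of Theorem~\ref{xythm}: from the marginalization identity $P_{X_k}(\cdot)=\int P_{X_k\mid x_{\mathrm{pa}(k)}}(\cdot)\,dP_{X_{\mathrm{pa}(k)}}(x_{\mathrm{pa}(k)})$ — valid because $X_k$ together with its ancestors forms an ancestral set whose induced sub-network represents the corresponding marginal — one gets $P_{X_k\mid x_{\mathrm{pa}(k)}}(\{p(x_k)=0\})=0$ for $P_{X_{\mathrm{pa}(k)}}$-a.e.\ value of the parents; after modifying each conditional on the relevant null set (which does not affect the product), the same absolute-continuity argument as before yields conditionals $\ll P_{X_k}$, i.e.\ condition~4. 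Finally $4\Rightarrow3$ is immediate: take $\mu_k:=P_{X_k}$ (finite, hence $\sigma$-finite) and Radon--Nikod\'ym derivatives.

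For $3\Rightarrow1$ I would verify that $q(x_1,\dots,x_n):=\prod_k p(x_k\mid x_{\mathrm{pa}(k)})$ is a joint density w.r.t.\ $\mu_1\times\dots\times\mu_n$, which is $\sigma$-finite so that Fubini--Tonelli applies with no integrability or support restriction: peel off the conditional densities in reverse topological order, the factor for the sink node $X_n$ being the only one that depends on $x_n$ and integrating to $1$ over $\mu_n$, which reduces the identity to the network on $X_1,\dots,X_{n-1}$. For $1\Rightarrow2$ I would copy the $1\Rightarrow2$ argument of Theorem~\ref{xythm} essentially verbatim, with $U,V$ replaced by the zero-sets $U_1,\dots,U_n$ of the marginal densities and $N:=\bigcup_k\{x_k\in U_k\}$; the key point is that the complement of $N$ equals $\prod_k\{p(x_k)>0\}$, on which $\mu_1\times\dots\times\mu_n$ is $\sigma$-finite, so that $(P_{X_1}\times\dots\times P_{X_n})(T\setminus N)=0$ forces $(\mu_1\times\dots\times\mu_n)(T\setminus N)=0$ and hence $P_{X_1,\dots,X_n}(T\setminus N)=0$, while $P_{X_1,\dots,X_n}(N)\le\sum_k P_{X_k}(U_k)=0$. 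Statement~6 then follows as in Theorem~\ref{xythm} from the fact that $(F_1\times\dots\times F_n)^{-1}$ carries $(P_{F_1(X_1)}\times\dots\times P_{F_n(X_n)})$-null sets to $(P_{X_1}\times\dots\times P_{X_n})$-null sets, which by condition~2 are $P_{X_1,\dots,X_n}$-null.

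I expect the main obstacle to be the Bayes-network bookkeeping rather than any analytic step, namely: (i) that the sub-network induced on an ancestral (in particular, a topological-prefix) set of nodes represents the corresponding marginal distribution; (ii) that the transition-measure product is associative enough that $\prod_k P_{X_k\mid\mathrm{pa}(k)}$ really equals $P_{X_1,\dots,X_n}$ and that integrating a conditional against the accumulated joint distribution of its predecessors yields the right marginal; and (iii) that one may modify each conditional on a null set of parent values — to upgrade ``for $P_{X_{\mathrm{pa}(k)}}$-a.e.\ $y$'' to ``for all $y$'' — without disturbing the product. These are standard DAG and Markov-kernel facts, but they are where the care lies; the analytic steps are near-verbatim copies of the $n=2$ proof. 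A shorter route for the three directions $\{3,4,5\}\Rightarrow2$ is induction on $n$: apply Theorem~\ref{xythm} to the pair $\bigl((X_1,\dots,X_{n-1}),X_n\bigr)$ to obtain $P_{X_1,\dots,X_n}\ll P_{X_1,\dots,X_{n-1}}\times P_{X_n}$, and combine with the inductive hypothesis $P_{X_1,\dots,X_{n-1}}\ll P_{X_1}\times\dots\times P_{X_{n-1}}$ via $\rho\ll\sigma\Rightarrow\rho\times\tau\ll\sigma\times\tau$ and transitivity of $\ll$; but the direct cycle keeps everything self-contained.
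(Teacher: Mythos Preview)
Your proposal is correct and follows essentially the same route as the paper: the same cycle $2\Rightarrow5\Rightarrow4\Rightarrow3\Rightarrow1\Rightarrow2$ with the chain network for $2\Rightarrow5$, the node-by-node null-set argument for $5\Rightarrow4$, the product-of-conditional-densities for $3\Rightarrow1$, and the union-of-marginal-zero-sets trick for $1\Rightarrow2$, plus the pushforward computation for statement~6. If anything you are slightly more explicit than the paper about the Bayes-network bookkeeping (ancestral sub-networks representing marginals, legitimacy of modifying conditionals on null parent sets), which the paper sweeps under ``by the definition of conditional distribution''.
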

\begin{proof}
  \newcommand{\parents}[2]{{#1_{<#2}}}
  \newcommand{\ancestors}[2]{{#1_{:#2-1}}} 
  This proof is a straightforward generalization of the proof of
  Theorem~\ref{xythm}.

  For brevity, we shall denote the parents of $x_k$ by $\parents{x}{k}
  := (x_{j(k,1)},\dots,x_{j(k,n_k)})$.
\begin{description}
\item[2 $\Rightarrow$ 5:] The joint density $p := dP_{X_1,\dots,X_n}
  /d(P_{X_1}\times\dots\times P_{X_n})$ induces for each $k$ the
  conditional density $p(x_k\mid x_1,\dots,x_{k-1})$ w.r.t.\ the
  marginal distribution $P_{X_k}$.  Thus, the required Bayes network
  is given by $\parents{x}{k} := (x_1,\dots,x_{k-1})$ for all
  $k=1,\dots,n$.
\item[5 $\Rightarrow$ 4:] Let $k\in\{1,\dots,n\}$ be arbitrary.
  Denoting $N := \{\,x_k\in\mathsf{X}_k:p(x_k) = 0\,\}$, we have by
  the definition of conditional distribution
  \[
  0 = \int_N p(x_k)d\mu(x_k) = P_{X_k}(N)
  =\int_\parents{\mathsf{X}}{k}P_{X_k\mid\parents{x}{k}}(N)
  dP_\parents{X}{k}(\parents{x}{k}),
  \]
  which implies $P_{X_k\mid \parents{x}{k}}(N) = 0$ for
  $P_{\parents{X}{k}}$-a.e.\ $\parents{x}{k}$.  However, as
  $P_{X_k\mid \parents{x}{k}}$ is only determined for
  $P_{\parents{x}{k}}$-a.e.\ $\parents{x}{k}$, we are free to modify
  it so that $P_{X_k\mid \parents{x}{k}}(N) = 0$ for all
  $\parents{x}{k}$.  We will show that this $P_{X_k\mid
  \parents{x}{k}}$ is dominated by $P_{X_k}$ for all $\parents{x}{k}$.
  Let $S\in\mathcal{X}_k$ be such that $P_{X_k}(S) = 0$.  Then, we
  have
  \[
  0 = P_{X_{k}}(S\setminus N) = \int_{S\setminus N} \underbrace{p(x_k)}_{>0}d\mu(x_k),
  \]
  which implies $\mu_k(S\setminus N) = 0$. As $P_{X_k\mid
  \parents{x}{k}}\ll\mu_k$, we have $P_{X_k\mid
  \parents{x}{k}}(S\setminus N) = 0$, but as also $P_{X_k\mid
  \parents{x}{k}}(N) = 0$, we obtain $P_{X_k\mid \parents{x}{k}}(S) =
  0$.  Thus, $P_{X_k\mid \parents{x}{k}}\ll P_{X_k}$ for all
  $\parents{x}{k}$.

\item[4 $\Rightarrow$ 3:] Choose $\mu_k = P_{X_k}$.
\item[3 $\Rightarrow$ 1:] By the definition of the conditional
  densities and Fubini's theorem, we have
  \[
  \begin{split}
  P_{X_1,\dots,X_n}(S) &= \int_S \prod_k
  p(x_k\mid\parents{x}{k})d\mu_k(x_k)\\
  &=\int_S \left[\prod_k p(x_k\mid\parents{x}{k})\right] 
  d(\mu_1\times\dots\times\mu_n)(x).
  \end{split}
  \]
  Thus, $\prod_k p(x_k\mid\parents{x}{k})$ is a joint density of
  $X_1,\dots,X_n$ w.r.t.\ $\mu_1\times\dots\times\mu_k$.
\item[1 $\Rightarrow$ 2:] Suppose that $p(x_1,\dots,x_n)$ is a joint
  density w.r.t.\ $\mu_1\times\dots\times\mu_n$ and let
  $S\in\mathcal{X}_1\otimes\dots\otimes\mathcal{X}_n$ be an arbitrary
  measurable set such that $(P_{X_1}\times\dots\times P_{X_n})(S) =
  0$.  We will show that then $P_{X_1,\dots,X_n}(S) = 0$.  Denoting
  \begin{eqnarray*}
    N_k &:=& \{\,x_k\in \mathsf{X}_k: p(x_k) = 0\,\},\\ N &:=&
    \bigcup_k \mathsf{X}_1\times\dots\times\mathsf{X}_{k-1}\times
    N_k\times \mathsf{X}_{k+1}\times\dots\times\mathsf{X}_n,
  \end{eqnarray*}
  we have $P_{X_k}(N_k) = 0$ for all $k$. Furthermore, as
  $\mu_1\times\dots\times\mu_n$ is $\sigma$-finite on $S\setminus N$,
  Fubini's theorem yields
  \[
  \begin{split}
  0 &= (P_{X_1}\times\dots\times P_{X_n})(S\setminus N) = \int_{S\setminus N}
  \prod_k p(x_k)d\mu_k(x_k)\\
  &=\int_{S\setminus N}\underbrace{\left[\prod_k p(x_k)\right]}_{>0}d(\mu_1\times\dots\times\mu_k)(x),
  \end{split}
  \]
  which implies that $(\mu_1\times\dots\times\mu_n)(S\setminus N) = 0$
  and so $P_{X_1,\dots,X_n}(S\setminus N) = 0$.  Thus,
  \[
  P_{X_1,\dots,X_n}(S) \le P_{X_1,\dots,X_n}(S\setminus N) + \sum_k
  P_{X_k}(N_k) = 0.
  \]
\item[2 $\Rightarrow$ 6:] Suppose that $F_k:
  \mathsf{X}_k\to\mathsf{X}'_k$ are arbitrary measurable mappings.  We
  show that $P_{X_1,\dots,X_n}\ll P_{X_1}\times\dots\times P_{X_n}$
  implies $P_{F_1(X_1),\dots,F_n(X_n)}\ll P_{F_1(X_1)}\times\dots\times
  P_{F_n(X_n)}$.  For any $S\in\mathcal{X}_1\otimes\dots\otimes\mathcal{X}_n$,
  \[
  \begin{split}
  0 &= (P_{F_1(X_1)}\times\dots\times P_{F_n(X_n)})(S) \\
  &= (P_{X_1}\times\dots\times P_{X_n})\left(
  \bigcup_{x\in S}F_1^{-1}(x_1)\times\dots\times F_n^{-1}(x_n)
  \right)
  \end{split}
  \]
  implies
  \[
  0 = P_{X_1,\dots,X_n}
  \left(
  \bigcup_{x\in S}F_1^{-1}(x_1)\times\dots\times F_n^{-1}(x_n)
  \right) = P_{F_1(X_1),\dots,F_n(X_n)}(S),
  \]
  where $F_k^{-1}$ denotes the preimage set.\qedhere
\end{description}
\end{proof}

\end{document}